\newtheorem{thm}{Theorem}[section]
\newtheorem{prop}[thm]{Proposition}
\theoremstyle{definition}
\newtheorem{defn}[thm]{Definition}
\theoremstyle{remark}
\newtheorem{rem}[thm]{Remark}
\numberwithin{equation}{section}
\begin{document}

\title[A note on (asymptotically) Weyl-almost periodic...]{A note on (asymptotically) Weyl-almost periodic 
properties of convolution products}

\author{Vladimir E. Fedorov}
\address{Chelyabinsk State State University, Chelyabinsk, 454 080, Russia}
\email{kar@csu.ru}

\author{Marko Kosti\' c}
\address{Faculty of Technical Sciences,
University of Novi Sad,
Trg D. Obradovi\' ca 6, 21125 Novi Sad, Serbia}
\email{marco.s@verat.net}

\begin{abstract}
The main aim of this paper is to investigate Weyl-$p$-almost periodic 
properties and asymptotically Weyl-$p$-almost periodic 
properties of convolution products. In such a way, we continue several recent research studies of ours which do concern a similar problematic.
\end{abstract}

\subjclass[2010]{Primary 43A60; Secondary 47D06}

\keywords{Weyl-$p$-almost periodic functions, asymptotically Weyl-$p$-almost periodic functions, convolution products.}

\maketitle

\section{Introduction and preliminaries}

In a series of recent research papers, the second named author has considered the invariance of (asymptotical) Weyl-$p$-almost periodicity 
 under the action of (finite) infinite 
convolution product, where $1\leq p<\infty.$ It has been perceived that the case $p>1$ is much more delicate for the analysis and, before proceeding any further, we would like to stress that 
Proposition 2.1 in \cite{irkutsk} is not correctly proved in the case that $p>1.$ In a recent erratum and addendum to the paper \cite{irkutsk}, we have introduced the class of  
quasi asymptotically almost periodic functions and considered quasi asymptotical almost periodicity of infinite convolution product 
\begin{align}\label{profa}
G(t)\equiv  t\mapsto \int_{-\infty}^{t}R(t-s)g(s)\, ds,\ t\in {\mathbb R},
\end{align}
where $p\geq 1$ and $g(\cdot)$ is (equi-)Weyl-$p$-almost periodic.

In this paper, we consider the invariance of (asymptotical) Weyl-$p$-almost periodicity 
under the action of (finite) infinite 
convolution product by assuming that the corresponding resolvent operator family $(R(t))_{t>0}\subseteq L(X,Y)$ has a certain growth order at zero and infinity (by $(X,\|\cdot\|),$ $(Y,\|\cdot\|_{Y})$ and $L(X,Y)$ we denote two non-trivial complex Banach spaces and the space consisting of all linear continuous operators from $X$ into $Y,$ respectively; in the sequel, we will use the standard terminology from the monograph \cite{knjigaho}). 
We specifically consider the following two types of growth rates:
\begin{align}\label{isti-e}
\|R(t)\|_{L(X,Y)}\leq Me^{-ct}t^{\beta -1},\ t>0\mbox{ for some finite constants }c>0,\ \beta \in (0,1],\ M>0,
\end{align}
or a substantially weaker one
\begin{align}\label{isti-ee}
\|R(t)\|_{L(X,Y)}\leq M\frac{t^{\beta -1}}{1+t^{\gamma}},\ t>0\mbox{ for some finite constants }\gamma>1,\ \beta \in (0,1],\ M>0.
\end{align}
The estimate \eqref{isti-e} appears in the theoretical studies of abstract degenerate differential equations of first order with multivalued linear operators ${\mathcal A}$ satisfying the condition (P) clarifed below, while the estimate
\eqref{isti-ee} appears in the theoretical studies of abstract degenerate fractional relaxation differential equations with multivalued linear operators ${\mathcal A}$ satisfying the same condition. 

The genesis of paper is stimulated by reading some recent results of 
F. Bedouhene, N. Challali, O. Mellah, P. Raynaud de Fitte and M. Smaali, which are still unpublished and where the cases that $X=Y$ and $(R(t))_{t\geq 0}$ is an exponentially decaying strongly continuous non-degenerate $C_{0}$-semigroup have been analyzed. The main novelty of this paper is the consideration of growth 
rate \eqref{isti-ee} for solution operator families $(R(t))_{t>0}$ not necessarily strongly continuous at zero, with regards to the existence and uniqueness of (asymptotically) Weyl-$p$-almost periodic solutions of
abstract fractional differential equations; see e.g. \cite{faviniyagi}-\cite{fedorov-primonja}, \cite{afid-mlo} and \cite{fedorov} for some recent results treating the abstract degenerate fractional differential equations.

There is no need to say that fractional calculus and fractional differential equations are rapidly growing fields of research, due to their invaluable importance in modeling real world phenomena appearing in many fields such
as astrophysics, electronics, diffusion, chemistry, biology,
electricity and thermodynamics. It is almost impossible to summarize here all relevant contributions made recently in these fields; see e.g. \cite{kilbas}-\cite{knjigaho}, \cite{samko} and references cited therein for the basic and not-updated information on the subject. Throughout the paper, we use two different types of fractional derivatives.
The Weyl-Liouville fractional derivative $D_{t,+}^{\gamma}u(t)$ of order $\gamma  \in (0,1)$ is defined for those continuous functions
$u : {\mathbb R} \rightarrow X$
satisfying that $t\mapsto \int_{-\infty}^{t}g_{1-\gamma}(t-s)u(s)\, ds,$ $t\in {\mathbb R}$ is a well-defined continuously differentiable mapping, by
$$
D_{t,+}^{\gamma}u(t):=\frac{d}{dt}\int_{-\infty}^{t}g_{1-\gamma}(t-s)u(s)\, ds,\quad t\in {\mathbb R}.
$$
Set $
D_{t,+}^{1}u(t):=-(d/dt)u(t).$
For further information about Weyl-Liouville fractional derivatives, we refer the reader to the paper \cite{relaxation-peng} by
J. Mu, Y. Zhoa and L. Peng. 

If $\alpha >0$ and $m=\lceil \alpha \rceil,$ then 
the Caputo fractional derivative\index{fractional derivatives!Caputo}
${\mathbf D}_{t}^{\alpha}u(t)$ is defined for those functions $u\in
C^{m-1}([0,\infty) : X)$ satisfying that $g_{m-\alpha} \ast
(u-\sum_{k=0}^{m-1}u_{k}g_{k+1}) \in C^{m}([0,\infty) : X),$
by
$$
{\mathbf
D}_{t}^{\alpha}u(t)=\frac{d^{m}}{dt^{m}}\Biggl[g_{m-\alpha}
\ast \Biggl(u-\sum_{k=0}^{m-1}u_{k}g_{k+1}\Biggl)\Biggr].
$$
For more details about the abstract fractional differential equations with Caputo derivatives, the reader may consult the monograph \cite{knjigaho} and references cited therein.

The organization of this paper can be simply described as follows. In Subsection \ref{prcko}, we recall the basic definitions and results about generalized (asymptotically) almost periodic functions which will be necessary for our further work. Our main contributions are Theorem \ref{brzo-kuso} and Proposition \ref{brzo-kusowas}; besides them, Section \ref{prcko-prim} contains a great deal of other remarks and observations
about problems considered. It is clear that our results are applicable in the analysis of a wide class of abstract inhomogenous integro-differential equations, which can be degenerate or non-degenerate in time-variable and which may or may not contain fractional derivatives. The main aim of Section \ref{prcko-duo} is to present certain applications of Theorem \ref{brzo-kuso} and Proposition \ref{brzo-kusowas} in the analysis of existence and uniqueness of Weyl-$p$-almost periodic solutions of the abstract fractional differential inclusion \eqref{inkpow} and asymptotically Weyl-$p$-almost periodic solutions of the abstract fractional differential inclusion (DFP)$_{f,\gamma}$ clarifed below. 

\subsection{Generalized almost periodic functions and asymptotically generalized almost periodic functions}\label{prcko}

Unless stated otherwise, we will always assume henceforth that $1\leq p<\infty .$ Let $I=[0,\infty)$ or $I={\mathbb R}.$
A function $f\in L^{p}_{loc}(I :X)$ is said to be Stepanov $p$-bounded iff
$$
\|f\|_{S^{p}}:=\sup_{t\in I}\Biggl( \int^{t+1}_{t}\|f(s)\|^{p}\, ds\Biggr)^{1/p}<\infty.
$$
The space $L_{S}^{p}(I:X)$ consisted of all $S^{p}$-bounded functions becomes a Banach space equipped with the above norm. 

Let $1\leq p <\infty,$ let $l>0,$ and let $f,\ g\in L^{p}_{loc}(I :X).$ We define the Stepanov `metric' by 
\begin{align*}
D_{S_{l}}^{p}\bigl[f(\cdot),g(\cdot)\bigr]:= \sup_{x\in I}\Biggl[ \frac{1}{l}\int_{x}^{x+l}\bigl \| f(t) -g(t)\bigr\|^{p}\, dt\Biggr]^{1/p}.
\end{align*} 
Then it is well-known that there exists
\begin{align*}
D_{W}^{p}\bigl[f(\cdot),g(\cdot)\bigr]:=\lim_{l\rightarrow \infty}D_{S_{l}}^{p}\bigl[f(\cdot),g(\cdot)\bigr]
\end{align*}
in $[0,\infty].$
The distance $D_{W}^{p}[f(\cdot),g(\cdot)]$ appearing above is called the Weyl distance of $f(\cdot)$ and $g(\cdot).$ The Stepanov and Weyl `norm' of $f(\cdot)$ are defined by
$$
\bigl\| f  \bigr\|_{S_{l}^{p}}:= D_{S_{l}}^{p}\bigl[f(\cdot),0\bigr]\ \mbox{  and  }\ \bigl\| f  \bigr\|_{W^{p}}:= D_{W}^{p}\bigl[f(\cdot),0\bigr],
$$ 
respectively. The notions of Stepanov $p$-boundedness and Weyl $p$-boundedness are mutually equivalent, i.e., for any function $f\in L^{p}_{loc}(I :X),$ we have
$$
\bigl\| f  \bigr\|_{S_{l}^{p}}<\infty \ \ \mbox{ iff } \ \ \ \bigl\| f  \bigr\|_{W^{p}}<\infty .
$$

The notion of an (equi-)Weyl-$p$-almost periodic function is given below (see e.g. \cite{irkutsk} and references cited therein).

\begin{defn}\label{weyl-defn}
Let $f\in L_{loc}^{p}(I: X).$ 
\begin{itemize}
\item[(i)] We say that the function $f(\cdot)$ is equi-Weyl-$p$-almost periodic, $f\in e-W_{ap}^{p}(I:X)$ for short, iff for each $\epsilon>0$ we can find two real numbers $l>0$ and $L>0$ such that any interval $I'\subseteq I$ of length $L$ contains a point $\tau \in  I'$ such that
\begin{align}
\notag
\sup_{x\in I}\Biggl[  \frac{1}{l}\int_{x}^{x+l}\bigl \| f(t+\tau)& -f(t)\bigr\|^{p}\, dt\Biggr]^{1/p} \leq \epsilon, \\ \notag & \mbox{ i.e., } D_{S_{l}}^{p}\bigl[f(\cdot+\tau),f(\cdot)\bigr] \leq \epsilon.
\end{align} 
\item[(ii)] We say that the function $f(\cdot)$ is Weyl-$p$-almost periodic, $f\in W_{ap}^{p}(I: X)$ for short, iff for each $\epsilon>0$ we can find a real number $L>0$ such that any interval $I'\subseteq I$ of length $L$ contains a point $\tau \in  I'$ such that
\begin{align*}
\notag
\lim_{l\rightarrow \infty} \sup_{x\in I}\Biggl[ \frac{1}{l}\int_{x}^{x+l}\bigl \| f(t+\tau) & -f(t)\bigr\|^{p}\, dt\Biggr]^{1/p} \leq \epsilon, \\  & \mbox{ i.e., } \lim_{l\rightarrow \infty}D_{S_{l}}^{p}\bigl[f(\cdot+\tau),f(\cdot)\bigr] \leq \epsilon.
\end{align*} 
\end{itemize}
\end{defn}

Denote by $APS^{p}(I:X)$ the space consisting of all Stepanov $p$-almost periodic functions from the interval $I$ into $X$ (\cite{EJDE}). Then it is well known that 
$
APS^{p}(I:X) \subseteq e-W_{ap}^{p}(I:X) \subseteq W_{ap}^{p}(I:X)
$
in the set theoretical sense and that any of these two inclusions can be strict.

Denote by $C_{0}([0,\infty):X)$ the vector space consisting of all bounded continuous functions from $[0,\infty)$ into $X$ which vanish at infinity. We say that an $S^{p}$-bounded function
$q : [0,\infty) \rightarrow X$ is Stepanov $p$-vanishing iff the function
$t\mapsto q(t+\cdot),$ $t\geq 0$ belongs to the class $C_{0}([0,\infty) : L^{p}([0,1]:X)).$ We denote by $S^{p}_{0}([0,\infty) :X)$ the vector space consisting of all Stepanov $p$-vanishing functions.
If $q\in L_{loc}^{p}([0,\infty ) : X),$ then we define the function ${\bf q}(\cdot,\cdot): [0,\infty) \times [0,\infty) \rightarrow X$ by
$$
{\bf q}(t,s):=q(t+s),\quad t,\, s\geq 0.
$$ 
The class of (equi-)Weyl-$p$-vanishing functions has been recently introduced as follows (see e.g. \cite{irkutsk}):

\begin{defn}\label{stea-weyl}
Let $q\in L_{loc}^{p}([0,\infty ) : X).$
\begin{itemize}
\item[(i)]
It is said that $q(\cdot)$ is Weyl-$p$-vanishing iff
\begin{align*}
\lim_{t\rightarrow \infty}\bigl\|{\bf q}(t,\cdot)\bigr\|_{W^{p}}=0,\mbox{ i.e., }\lim_{t\rightarrow \infty}\, \lim_{l\rightarrow \infty}\ \ \sup_{x\geq 0}\Biggl[ \frac{1}{l}\int_{x}^{x+l}\bigl \| q(t+s)\bigr\|^{p}\, ds\Biggr]^{1/p}=0.
\end{align*}
\item[(ii)] It is said that $q\in L_{loc}^{p}([0,\infty ) : X)$ is equi-Weyl-$p$-vanishing iff
\begin{align*}
\lim_{l\rightarrow \infty}\, \lim_{t\rightarrow \infty}\ \ \sup_{x\geq 0}\Biggl[ \frac{1}{l}\int_{x}^{x+l}\bigl \| q(t+s)\bigr\|^{p}\, ds\Biggr]^{1/p}=0.
\end{align*}
\end{itemize}
\end{defn}

We know that $C_{0}([0,\infty):X) \subseteq S^{p}_{0}([0,\infty):X) \subseteq e-W^{p}_{0}([0,\infty):X)\subseteq W^{p}_{0}([0,\infty):X)$ and that any of these three inclusions can be strict.  

\section[Formulation and proof of main results]{Formulation and proof of main results}\label{prcko-prim}

Albeit given with a relatively non-complicated proof, the following theorem can be viewed as the main result of this paper:

\begin{thm}\label{brzo-kuso}
Let $1/p+1/q=1$ and let $(R(t))_{t>0}\subseteq L(X,Y)$ satisfy \eqref{isti-ee}. Let a function $g : {\mathbb R} \rightarrow X$ be (equi-)Weyl-$p$-almost periodic and Weyl $p$-bounded, and let $q(\beta-1)>-1$ provided that $p>1$, resp. $\beta=1,$ provided that $p=1$. Then the function
$G : {\mathbb R} \rightarrow Y,$ defined through \eqref{profa}, is bounded continuous and (equi-)Weyl-$p$-almost periodic.
\end{thm}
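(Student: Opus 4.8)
The plan is to split the analysis into three parts: boundedness and continuity of $G$, and then the (equi-)Weyl-$p$-almost periodicity. First, I would decompose the defining integral dyadically in terms of the convolution kernel, writing $G(t)=\int_{0}^{\infty}R(s)g(t-s)\,ds$ and estimating $\|G(t)\|_Y$ by splitting the $s$-integral over the intervals $[k,k+1]$, $k\in\NN_0$. On each such block one applies H\"older's inequality with exponents $p$ and $q$: the factor $\bigl(\int_{k}^{k+1}\|R(s)\|_{L(X,Y)}^{q}\,ds\bigr)^{1/q}$ is finite and summable in $k$ precisely because \eqref{isti-ee} gives $\|R(s)\|_{L(X,Y)}\le M s^{\beta-1}/(1+s^{\gamma})$ with $q(\beta-1)>-1$ (integrability near $0$) and $\gamma>1$ (so that $s^{\beta-1-\gamma}$ is integrable near $\infty$, after the geometric-type summation); the factor $\bigl(\int_{k}^{k+1}\|g(t-s)\|^{p}\,ds\bigr)^{1/p}$ is bounded by $\|g\|_{S^{p}}$, which is finite because Weyl $p$-boundedness is equivalent to Stepanov $p$-boundedness. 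In the case $p=1$ one has $\beta=1$ and the kernel estimate is simply $M/(1+s^{\gamma})\in L^{1}(0,\infty)$, so the same block-summation argument works with $q=\infty$. This yields $\sup_{t}\|G(t)\|_Y\le C\,\|g\|_{S^{p}}$ for a constant $C=C(M,p,\beta,\gamma)$.

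For continuity, I would use the representation $G(t+h)-G(t)=\int_{0}^{\infty}R(s)\bigl(g(t+h-s)-g(t-s)\bigr)\,ds$ and apply exactly the same block-H\"older estimate, so that $\|G(t+h)-G(t)\|_Y$ is controlled by $\sup_{k}\bigl(\int_{k}^{k+1}\|g(t+h-s)-g(t-s)\|^{p}\,ds\bigr)^{1/p}$ times the summable kernel factor; continuity then follows from $L^{p}$-continuity of translations, i.e. continuity of the map $h\mapsto g(\cdot+h)$ in the Stepanov norm, which holds for (equi-)Weyl-$p$-almost periodic functions. (Alternatively, one truncates the tail of the $s$-integral using the decay, then handles the compact part.)

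For the almost periodicity, fix $\eps>0$. By Definition \ref{weyl-defn}, in the equi-Weyl case there are $l,L>0$ so that every interval of length $L$ contains $\tau$ with $D^{p}_{S_{l}}\bigl[g(\cdot+\tau),g(\cdot)\bigr]\le\eps$; in the Weyl case one gets $\limsup_{l\to\infty}D^{p}_{S_{l}}\bigl[g(\cdot+\tau),g(\cdot)\bigr]\le\eps$. The key identity is $G(t+\tau)-G(t)=\int_{0}^{\infty}R(s)\bigl(g(t+\tau-s)-g(t-s)\bigr)\,ds$, and applying the block-H\"older estimate gives a pointwise bound
\begin{align*}
\bigl\|G(t+\tau)-G(t)\bigr\|_Y\le C_0\sup_{u\in\RR}\Bigl(\int_{u}^{u+1}\bigl\|g(v+\tau)-g(v)\bigr\|^{p}\,dv\Bigr)^{1/p},
\end{align*}
where $C_0=\sum_{k\ge0}\bigl(\int_{k}^{k+1}\|R(s)\|_{L(X,Y)}^{q}\,ds\bigr)^{1/q}<\infty$. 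Since the right-hand side is a constant (in $t$), one immediately gets $D^{p}_{S_{l'}}\bigl[G(\cdot+\tau),G(\cdot)\bigr]\le C_0\cdot(\text{that }S^{1}\text{-type quantity})$ for every $l'>0$; to convert this into the required $\le\eps'$ one relates the $S^{1}$-sup of $\|g(\cdot+\tau)-g(\cdot)\|^{p}$ back to $D^{p}_{S_{l}}[g(\cdot+\tau),g(\cdot)]$ — since the sup over unit windows is comparable, up to the factor $l$, to the sup over windows of length $l$ — and rescales $\eps$ accordingly. This passes the property from $g$ to $G$ in both the equi and the non-equi cases, and the bounded continuity of $G$ established above upgrades the conclusion to membership in the stated classes.

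The main obstacle I expect is the bookkeeping in the block-H\"older estimate: one must verify that $\sum_{k\ge0}\bigl(\int_{k}^{k+1}\|R(s)\|^{q}_{L(X,Y)}\,ds\bigr)^{1/q}$ converges under the hypotheses $q(\beta-1)>-1$ and $\gamma>1$, paying attention to the behavior near $s=0$ (only the first block, handled by $q(\beta-1)>-1$) versus near $s=\infty$ (where the bound $\|R(s)\|^{q}\le M^{q}s^{q(\beta-1-\gamma)}$ forces $q(\beta-1-\gamma)<-1$, which follows from $\gamma>1$ together with $\beta\le1$, so that $\sum_k k^{\beta-1-\gamma}<\infty$). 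The degenerate endpoint $p=1$ must be treated separately since there $q=\infty$ and $\beta=1$ is forced. Everything else is routine application of H\"older, the Stepanov–Weyl equivalence, and the translation-continuity of Stepanov-bounded functions.
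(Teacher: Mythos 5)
Your treatment of boundedness and continuity is fine and essentially matches the paper (which defers these points to the proof of Proposition 2.1 of \cite{irkutsk}, based on the same block-H\"older summation of $\|R(\cdot)\|_{L^{q}[k,k+1]}$). The gap is in the almost-periodicity step. Your pointwise bound $\|G(t+\tau)-G(t)\|_{Y}\leq C_{0}\sup_{u}\bigl(\int_{u}^{u+1}\|g(v+\tau)-g(v)\|^{p}dv\bigr)^{1/p}$ is a correct inequality, but the quantity on the right is a \emph{Stepanov} (unit-window) measure of how good the translation number $\tau$ is, whereas the hypothesis on $g$ only controls \emph{Weyl} averages over windows of length $l$ with $l$ large. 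The only available passage from the length-$l$ average to the unit-window sup is $\sup_{u}\int_{u}^{u+1}\leq\sup_{x}\int_{x}^{x+l}\leq l\,\epsilon^{p}$, which costs a factor $l^{1/p}$ with $l=l(\epsilon,\tau)$ large (and $l\to\infty$ in the non-equi Weyl case); ``rescaling $\epsilon$'' cannot absorb this, since $l$ is dictated by $\epsilon$ and $\tau$ and is not under your control. Concretely, for an equi-Weyl-$p$-almost periodic $g$ that is not Stepanov $p$-almost periodic (the inclusion is strict), the $\tau$'s furnished by the definition need not make your right-hand side small at all, so your estimate cannot yield the conclusion. This is exactly the subtlety the authors flag in the introduction as the reason the earlier proof of \cite[Proposition 2.1]{irkutsk} fails for $p>1$.

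What the paper does instead is keep the $s$-dependence through the H\"older step: it chooses $\zeta$ with $\frac{1}{p}<\zeta<\frac{1}{p}+\gamma-\beta$ and factors the kernel as $\frac{|s|^{\beta-1}(1+|s|^{\zeta})}{1+|s|^{\gamma}}\cdot\frac{1}{1+|s|^{\zeta}}$, putting the first factor in $L^{q}((-\infty,0))$ and leaving $\bigl[\int_{-\infty}^{0}\|g(s+t+\tau)-g(s+t)\|^{p}(1+|s|^{\zeta})^{-p}ds\bigr]^{1/p}$ as the $L^{p}$ part. Raising to the power $p$, averaging over $t\in[x,x+l]$ and applying Fubini, the inner quantity $\frac{1}{l}\int_{x}^{x+l}\|g(s+t+\tau)-g(s+t)\|^{p}dt$ is, for each fixed $s$, a window average of length $l$ of the same function (based at $x+s$), hence bounded by $\epsilon^{p}$ by the hypothesis with the \emph{same} $l$. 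Your block decomposition discards precisely this by collapsing everything to a unit-window sup before the $t$-average is taken; to repair your argument you would have to postpone the supremum until after averaging in $t$, which is the paper's Fubini trick.
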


\begin{proof}
We will consider the case that $g(\cdot)$ is Weyl-$p$-almost periodic with $p>1$ and explain the main differences in the case that $p=1.$ Without loss of generality, we may assume that $X=Y.$ Since we have assumed that $g(\cdot)$ is Weyl $p$-bounded (equivalently, Stepanov $p$-bounded) and $q(\beta-1)>-1$, we can repeat literally the arguments given in the proof of \cite[Proposition 2.1]{irkutsk} to deduce that $G(\cdot)$ is bounded and continuous on the real line (a similar argumentation works in the case that $p=1$). Therefore, it remains to be proved that $G(\cdot)$ is Weyl-$p$-almost periodic. 

In order to do that, fix a number $\epsilon>0.$ By definition, we can find a real number $L>0$ such that any interval $I'\subseteq I$ of length $L$ contains a point $\tau \in  I'$ such that there exists a number $l(\epsilon,\tau)>0$ so that
\begin{align}\label{austrija}
D_{S_{l}}^{p}\bigl[g(\cdot+\tau),g(\cdot)\bigr] \leq \epsilon,\  l\geq l(\epsilon,\tau).
\end{align}
On the other hand, it is clear that
\begin{align}
\notag \|u(t+\tau)- u(t)\|\leq \int_{-\infty}^{0}\|R(-s)\| \|g(s+t+\tau)-g(s+t)\|\, ds
\\\label{medijana}\leq M \int_{-\infty}^{0}|s|^{\beta-1} \|g(s+t+\tau)-g(s+t)\| / (1+|s|^{\gamma}) \, ds,\ t\in {\mathbb R}.
\end{align}
Since $\gamma>1$ and $\beta \in (0,1],$ we have the existence of a positive real number $\zeta >0$ 
satisfying
$$
\frac{1}{p}<\zeta <\frac{1}{p}+\gamma-\beta
$$
(in the case that $p=1,$ we can take any number $\zeta \in (1,\gamma)$ and repeat the same procedure).
This implies that the function $s\mapsto \frac{|s|^{\beta-1}(1+|s|^{\zeta})}{1+|s|^{\gamma}},$ $s<0$ belongs to the space $L^{q}((-\infty,0))$ as well as that the function $s\mapsto \frac{1}{1+|s|^{\zeta}},$ $s<0$ belongs to the space $L^{p}((-\infty,0)).$ 
The integral
$$
\int^{0}_{-\infty}\|g(s+t+\tau)-g(s+t)\|^{p} / (1+|s|^{\zeta})^{p} \, ds
$$
converges for any $t\in {\mathbb R}$, which follows from the following computation
\begin{align*}
& \int^{0}_{-\infty}\|g(s+t+\tau)-g(s+t)\|^{p} / (1+|s|^{\zeta})^{p} \, ds
\\ \leq & \sum_{k=0}^{\infty}\int^{-k}_{-(k+1)}\|g(s+t+\tau)-g(s+t)\|^{p} / (1+|s|^{\zeta})^{p} \, ds
\\ \leq & 2^{p-1}\|g\|_{S^{p}}^{p}\sum_{k=0}^{\infty}\frac{1}{1+k^{\gamma}}.
\end{align*}
Further on, applying \eqref{medijana} and the H\" older inequality we get that
\begin{align*}
\|u(t+\tau)- u(t)\|
\leq M
\Bigl\|\Bigl(|\cdot|^{\beta-1}(1+|\cdot|^{\zeta})/(1+|\cdot|^{\gamma})\Bigr)\Bigr\|_{L^{q}((-\infty,0))}
\\ 
\times \Biggl[ \int^{0}_{-\infty}\|g(s+t+\tau)-g(s+t)\|^{p} / (1+|s|^{\zeta})^{p} \, ds\Biggr]^{1/p},\ t\in {\mathbb R}.
\end{align*}
Making use of the Fubini theorem and \eqref{austrija}, we get that 
\begin{align*}
\sup_{x\in {\mathbb R}}\Biggl[ & \frac{1}{l}\int^{x+l}_{x}\|u(t+\tau)- u(t)\|^{p}\, dt\Biggr]^{1/p}
\\ \leq & M \Bigl\|\Bigl(|\cdot|^{\beta-1}(1+|\cdot|^{\zeta})/(1+|\cdot|^{\gamma})\Bigr)\Bigr\|_{L^{q}((-\infty,0))}
\\ & \times 
\Biggl[\int^{0}_{-\infty}\frac{1}{(1+|s|^{ \zeta})^{p}}\Biggl(\sup_{x\in {\mathbb R}}\frac{1}{l}\int^{x+l}_{x}\|g(s+t+\tau)-g(s+t)\|^{p}dt \Biggr)  \, ds\Biggr]^{1/p}
\\ \leq & M \epsilon \Bigl\|\Bigl(|\cdot|^{\beta-1}(1+|\cdot|^{\zeta})/(1+|\cdot|^{\gamma})\Bigr)\Bigr\|_{L^{q}((-\infty,0))} \int^{0}_{-\infty}\frac{ds}{(1+|s|^{ \zeta})^{p}},
\end{align*}
for any $l(\epsilon,\tau)>0.$ 
This completes the proof of theorem in a routine manner.
\end{proof}

\begin{rem}\label{jazzine}
Let $1\leq p<\infty.$ Then any equi-Weyl-$p$-almost periodic function is automatically Weyl-$p$-bounded, which seems to be still unknown for Weyl-$p$-almost periodic functions.
\end{rem}

The analysis of asymptotically Stepanov-$p$-almost periodic and asymptotically (equi-)Weyl-$p$-almost periodic properties of finite convolution product is not trivial in general case and we need some extra conditions on the ergodic part of function under consideration (denoted henceforth by $q(\cdot)$) in order to obtain any relevant result in this direction; the situation is, unfortunately, similar if the resolvent family $(R(t))_{t>0}\subseteq L(X,Y)$ satisfies the estimate \eqref{isti-e} or \eqref{isti-ee}. In this paper, we will prove the following general proposition with regards to this question:

\begin{prop}\label{brzo-kusowas}
Let $q\in L^{p}_{loc}([0,\infty) : X),$ $1/p+1/q=1$ and let $(R(t))_{t>0}\subseteq L(X,Y)$ satisfy \eqref{isti-ee}. Let a function $g : {\mathbb R} \rightarrow X$ be (equi-)Weyl-$p$-almost periodic and Weyl $p$-bounded, and let $q(\beta-1)>-1$ provided that $p>1$, resp. $\beta=1,$ provided that $p=1$. 
Suppose that the function
$$
t\mapsto  Q(t)\equiv \int^{t}_{0}R(t-s)q(s)\, ds,\ t\geq 0
$$
belongs to the space ${\mathcal F}_{Y},$ which equals to $C_{0}([0,\infty):Y),$ $S^{p}_{0}([0,\infty):Y),$ $e-W^{p}_{0}([0,\infty):Y)$ or $W^{p}_{0}([0,\infty):Y).$
Then the function
$$
H(t)\equiv \int^{t}_{0}R(t-s)[g(s)+q(s)]\, ds,\ t\geq 0
$$
is continuous and 
belongs to the class $(e-)W_{ap}^{[0,\infty),p}(Y) +{\mathcal F}_{Y}$, where $(e-)W_{ap}^{[0,\infty),p}(Y)$ stands for the space of all restictions of $Y$-valued (equi-)Weyl-$p$-almost periodic functions from the real line to the interval $[0,\infty).$
\end{prop}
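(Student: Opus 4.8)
The plan is to decompose $H(t)$ into the infinite convolution product $G(t)$ from \eqref{profa} restricted to $[0,\infty)$, plus correction terms that land in ${\mathcal F}_{Y}$. Explicitly, write
\begin{align*}
H(t) = \int_{-\infty}^{t} R(t-s)g(s)\, ds - \int_{-\infty}^{0} R(t-s)g(s)\, ds + \int_{0}^{t} R(t-s)q(s)\, ds,\ t\geq 0,
\end{align*}
so that $H(t) = G(t)|_{[0,\infty)} + G_{1}(t) + Q(t)$, where $G_{1}(t) := -\int_{-\infty}^{0}R(t-s)g(s)\, ds$. By hypothesis $Q(\cdot)\in{\mathcal F}_{Y}$, and by Theorem \ref{brzo-kuso} the function $G(\cdot)$ is bounded continuous and (equi-)Weyl-$p$-almost periodic on ${\mathbb R}$, so its restriction to $[0,\infty)$ lies in $(e-)W_{ap}^{[0,\infty),p}(Y)$. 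Hence the whole argument reduces to showing that $G_{1}(\cdot)$ is continuous and belongs to ${\mathcal F}_{Y}$; since $C_{0}([0,\infty):Y)\subseteq S^{p}_{0}\subseteq e-W^{p}_{0}\subseteq W^{p}_{0}$, it suffices to prove $G_{1}\in C_{0}([0,\infty):Y)$, which then covers every admissible choice of ${\mathcal F}_{Y}$.

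To handle $G_{1}$, substitute $s\mapsto -s$ to get $G_{1}(t) = -\int_{0}^{\infty} R(t+s)g(-s)\, ds$, and estimate using \eqref{isti-ee}:
\begin{align*}
\|G_{1}(t)\|_{Y} \leq M\int_{0}^{\infty}\frac{(t+s)^{\beta-1}}{1+(t+s)^{\gamma}}\, \|g(-s)\|\, ds,\ t\geq 0.
\end{align*}
For $t$ bounded away from $0$ the kernel $(t+s)^{\beta-1}/(1+(t+s)^{\gamma})$ decays like $(t+s)^{\beta-1-\gamma}$, and since $\gamma>1$, $\beta\in(0,1]$ one can choose an auxiliary exponent $\zeta$ with $1/p<\zeta<1/p+\gamma-\beta$ exactly as in the proof of Theorem \ref{brzo-kuso}, so that $s\mapsto (t+s)^{\beta-1}(1+s^{\zeta})/(1+(t+s)^{\gamma})$ lies in $L^{q}((0,\infty))$ with $L^{q}$-norm tending to $0$ as $t\to\infty$ (here I would use monotone/dominated convergence on the explicit kernel), while $s\mapsto \|g(-s)\|/(1+s^{\zeta})$ lies in $L^{p}((0,\infty))$ by the same Stepanov-boundedness packet estimate $\sum_{k}(1+k^{\zeta})^{-p}\|g\|_{S^{p}}^{p}<\infty$ used in the theorem. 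The Hölder inequality then yields $\|G_{1}(t)\|_{Y}\to 0$ as $t\to\infty$; boundedness on $[0,\infty)$ and continuity (for $p=1$ one takes $\zeta\in(1,\gamma)$) follow along the lines already invoked for the boundedness and continuity of $G(\cdot)$ in the proof of Theorem \ref{brzo-kuso} and in \cite[Proposition 2.1]{irkutsk}.

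The main obstacle I anticipate is the behaviour of $G_{1}$ and of the continuity estimates \emph{near $t=0$}, where the factor $(t+s)^{\beta-1}$ is large and $(R(t))_{t>0}$ need not be strongly continuous at zero; this is precisely where the condition $q(\beta-1)>-1$ (for $p>1$), resp. $\beta=1$ (for $p=1$), is needed so that $|\cdot|^{\beta-1}$ is locally $L^{q}$ near the origin and the relevant integrals converge uniformly for $t$ in compact subsets of $[0,\infty)$. Once the $t\to\infty$ decay and the uniform local integrability near $0$ are in hand, assembling $H = G|_{[0,\infty)} + G_{1} + Q$ with $G|_{[0,\infty)}\in(e-)W_{ap}^{[0,\infty),p}(Y)$ and $G_{1}+Q\in{\mathcal F}_{Y}$ completes the proof in a routine manner.
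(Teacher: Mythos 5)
Your decomposition is, after the change of variables $s\mapsto t-s$, exactly the one used in the paper: the paper sets $F(t)=\int_0^t R(t-s)q(s)\,ds-\int_t^\infty R(s)g(t-s)\,ds$, and the second integral is precisely your $-G_1(t)$, so $H=G|_{[0,\infty)}+G_1+Q$ is the same splitting. The treatment of $G_1$ also matches in substance: the paper proves continuity by splitting $\int_t^\infty=\int_{t+1}^\infty+\int_t^{t+1}$ and using a uniformly convergent series plus H\"older estimates, and proves decay at infinity via the packet estimate $\|\int_t^\infty R(s)g(t-s)\,ds\|\leq\|g\|_{S^p}\sum_{k\geq 0}\|R(\cdot)\|_{L^q[t+k,t+k+1]}\to 0$; your $\zeta$-weight/H\"older argument is an equivalent way of getting the same decay from \eqref{isti-ee} and Stepanov boundedness of $g$. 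Likewise, your observation that it suffices to land $G_1$ in $C_0([0,\infty):Y)$ is the paper's remark that ${\mathcal F}_Y+C_0([0,\infty):Y)={\mathcal F}_Y$.

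There is one genuine omission. You write that "the whole argument reduces to showing that $G_1$ is continuous and belongs to ${\mathcal F}_Y$", but the proposition also asserts that $H$ itself is continuous, and when ${\mathcal F}_Y$ is $S^p_0$, $e$-$W^p_0$ or $W^p_0$, the hypothesis $Q\in{\mathcal F}_Y$ does \emph{not} give continuity of $Q$ (these are classes of locally $p$-integrable, generally discontinuous, functions). So continuity of $Q$ must be proved separately; the paper does this by writing $Q(t)=\int_0^t R(s)q(t-s)\,ds$ and running the same H\"older/$L^p$-translation-continuity argument used for the finite convolution with $g$, relying on $q\in L^p_{loc}$ and $q(\beta-1)>-1$ (resp.\ $\beta=1$). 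Adding that step closes the gap; everything else in your outline is the paper's argument.
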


\begin{proof}
Without loss of generality, we may assume that $X=Y.$
Define
\begin{align}\label{mi-smo}
F(t):=\int^{t}_{0}R(t-s)q(s)\, ds-\int^{\infty}_{t}R(s)g(t-s)\, ds,\ t\geq 0.
\end{align}
The local integrability of convolution products in \eqref{mi-smo} follows from the arguments given in the proofs of \cite[Proposition 1.3.4, Proposition 1.3.5]{a43}.
By Theorem \ref{brzo-kuso}, the function
$G : {\mathbb R} \rightarrow X,$ defined by \eqref{profa}, is bounded continuous and (equi-)Weyl-$p$-almost periodic. Due to the facts that
$H(t)=G(t)+F(t),$ $ t\geq 0,$ 
${\mathcal F}_{X}+C_{0}([0,\infty) : X)={\mathcal F}_{X}$ and our assumption that the function
$
Q(\cdot)
$
belongs to the space ${\mathcal F}_{X},$ it suffices to show that the mapping $Q(t)$ is continuous for $t\geq 0$ as well as that
the mapping $t\mapsto \int^{\infty}_{t}R(s)g(t-s)\, ds,$ $t\geq 0$ is in class $C_{0}([0,\infty) : X).$ The continuity of mapping $Q(t)$ for $t\geq 0$ can be proved as in the final part of proof of Theorem \ref{brzo-kuso}, by using the equality 
$Q(t)=\int^{t}_{0}R(s)q(t-s)\, ds,$ $t\geq 0,$ inclusion $q\in L^{p}_{loc}([0,\infty) : X)$
and
H\" older inequality.
To prove that the mapping $t\mapsto \int^{\infty}_{t}R(s)g(t-s)\, ds,$ $t\geq 0$ is in class $C_{0}([0,\infty) : X)$, observe first that 
$$
\int^{\infty}_{t}R(s)g(t-s)\, ds=\int^{\infty}_{t+1}R(s)g(t-s)\, ds
+\int^{t+1}_{t}R(s)g(t-s)\, ds, \ t\geq 0.
$$ 
The continuity of mapping $t\mapsto \int^{\infty}_{t+1}R(s)g(t-s)\, ds=\sum_{k=0}^{\infty}\int^{t+k+2}_{t+k+1}R(s)g(t-s)\, ds:=\sum_{k=0}^{\infty}F_{k}(t),$ $t\geq 0$ can be shown following the lines of the proof of \cite[Proposition 5]{element} since the mapping $F_{k}(\cdot)$
is continuous by the dominated convergence theorem and the series $\sum_{k=0}^{\infty}F_{k}(t)$ converges uniformly in $t\geq 0$ due to the Weierstrass criterion. To prove the continuity of mapping $\int^{\cdot+1}_{\cdot}R(s)g(\cdot -s)\, ds,$ fix a number $t\geq 0$ 
and  a sequence $(t_{n})_{n\in {\mathbb N}}$ in $[t,t+1]$ converging to $t$ as $n\rightarrow +\infty.$ Then an elementary argumentation involving the  H\" older inequality shows that:
\begin{align*}
\Biggl \| & \int^{t_{n}+1}_{t_{n}} R(s)g(t_{n} -s)\, ds -\int^{t+1}_{t}R(s)g(t -s)\, ds \Biggr\| 
\\  \leq & M\Biggl[ \int_{t}^{t_{n}}\frac{s^{\beta -1}}{1+s^{\gamma}}\|g(t-s)\|\, ds +\int_{t+1}^{t_{n}+1}\frac{s^{\beta -1}}{1+s^{\gamma}}\|g(t_{n}-s)\|\, ds
\\ + & \int^{t+1}_{t_{n}}\frac{s^{\beta -1}}{1+s^{\gamma}}\bigl \| g(t_{n}-s)-g(t-s) \bigr\| \, ds\Biggr]
\\ \leq & M \Biggl[ \Biggl\| \frac{\cdot^{\beta-1}}{1+\cdot^{\gamma}}\Biggr\|_{L^{q}[t,t_{n}]} \|g\|_{L^{p}[0,t_{n}-t]}+ \Biggl\|\frac{\cdot^{\beta-1}}{1+\cdot^{\gamma}}\Biggr\|_{L^{q}[t+1,t_{n}+1]} \|g\|_{L^{p}[0,t_{n}-t]} 
\\ + & \Biggl\|\frac{\cdot^{\beta-1}}{1+\cdot^{\gamma}}\Biggr\|_{L^{q}[0,t+2]} \|g(t_{n}-\cdot)-g(t-\cdot)\|_{L^{p}[0,t+2]}\Biggr]
\\ \leq &   M \Biggl[ \Biggl\| \frac{\cdot^{\beta-1}}{1+\cdot^{\gamma}}\Biggr\|_{L^{q}[t,t+1]} \|g\|_{L^{p}[0,t_{n}-t]}+ \Biggl\|\frac{\cdot^{\beta-1}}{1+\cdot^{\gamma}}\Biggr\|_{L^{q}[t+1,t+2]} \|g\|_{L^{p}[0,t_{n}-t]} 
\\ + & \Biggl\|\frac{\cdot^{\beta-1}}{1+\cdot^{\gamma}}\Biggr\|_{L^{q}[0,t+2]} \|g(t_{n}-\cdot)-g(t-\cdot)\|_{L^{p}[0,t+2]}\Biggr].
\end{align*}
The right continuity of mapping $\int^{\cdot+1}_{\cdot}R(s)g(\cdot -s)\, ds$ at point $t$ follows from the evident equalities $\lim_{n\rightarrow +\infty}\|g\|_{L^{p}[0,t_{n}-t]}=0$ and $\lim_{n\rightarrow +\infty}\|g(t_{n}-\cdot)-g(t-\cdot)\|_{L^{p}[0,t+2]}=0,$ while the left continuity can be proved analogously. The vanishing of function $t\mapsto \int^{\infty}_{t}R(s)g(t-s)\, ds,$ $t\geq 0$ at plus infinity follows from the estimates 
$$
\Biggl \| \int^{\infty}_{t}R(s)g(t-s)\, ds \Biggr \| \leq \|g\|_{S^{p}}\sum_{k=0}^{\infty}\|R(\cdot)\|_{L^{q}[t+k,t+k+1]}, \ t>0
$$
and
$$
\lim_{t\rightarrow +\infty}\sum_{k=0}^{\infty}\|R(\cdot)\|_{L^{q}[t+k,t+k+1]}=0;
$$
see \cite[Remark 2.14(ii)]{EJDE} and the proof of \cite[Proposition 2.5]{irkutsk}.
\end{proof}

\begin{rem}\label{spejsi}
The space $(e-)W_{ap}^{[0,\infty)}(Y)$ is contained in $(e-)W_{ap}^{p}([0,\infty) : Y).$ It is not clear whether an (equi-)Weyl-$p$-almost periodic function defined on $[0,\infty)$ can be extended to an (equi-)Weyl-$p$-almost periodic function defined on ${\mathbb R}.$ Therefore, it is not clear whether $(e-)W_{ap}^{p}([0,\infty) : Y)\subseteq (e-)W_{ap}^{[0,\infty)}(Y) 
.$
\end{rem}

\begin{rem}\label{ravinjo}
Suppose that $q\in C_{0}([0,\infty) : X)$ and $(R(t))_{t>0}\subseteq L(X,Y)$ satisfies \eqref{isti-ee}. The argumentation contained in the proof of \cite[Lemma 2.13]{ravi} combined with the fact that
$\int^{\infty}_{0}s^{\beta -1}/(1+s^{\gamma})\, ds<\infty$
shows that
$\lim_{t\rightarrow +\infty}Q(t)=0,$ so that $Q\in C_{0}([0,\infty) : Y).$ Some conditions on the function $q(\cdot)$ which ensure that the function $Q(\cdot)$ is Stepanov $p$-vanishing have been analyzed in \cite[Remark 2.14]{EJDE}.
\end{rem}

\begin{rem}\label{spepanov-i}
In \cite{nsjom}, the second named author has recently introduced various classes of generalized (asymptotically) $C^{(n)}$-almost periodic functions and investigated the invariance of generalized (asymptotical) $C^{(n)}$-almost periodicity under the action of convolution products; in particular, the class of (asymptotically) $C^{(n)}$-Weyl-$p$-almost periodic functions has been  
introduced and analyzed. It is worth noting that Theorem \ref{brzo-kuso} and Proposition \ref{brzo-kusowas} can be reconsidered for this class of functions, which can be left to the interested readers as an easy exercise (cf., especially, \cite[Proposition 4.2, Proposition 4.5, Proposition 4.7]{nsjom}). 
\end{rem}

\begin{rem}\label{zeljo}
Suppose that $q\in L^{p}_{loc}([0,\infty) : X),$ $1/p+1/q=1,$ the mapping $a : (0,\infty) \rightarrow (0,\infty)$ satisfies $0<a(t)<t,$ $t>0$ as well as that $(R(t))_{t>0}\subseteq L(X,Y)$ satisfies \eqref{isti-ee}. 
Let $p>1,$ let $B_{p}(0):=0$ and 
\begin{align*}
B_{p}(t):=a(t)^{1/q}&\bigl(t-a(t)\bigr)^{\beta -1-\gamma}\Biggl( \int^{a(t)}_{0}\|q(s)\|^{p}\, ds\Biggr)^{1/p} 
\\ + & \bigl(t-a(t)\bigr)^{\beta -1-\gamma+\frac{1}{q}}\Biggl( \int_{a(t)}^{t}\|q(s)\|^{p}\, ds\Biggr)^{1/p} ,\ t>0.
\end{align*}
Since $Q(t)=\int^{t}_{0}R(t-s)q(s)\, ds=\int^{a(t)}_{0}R(s)q(t-s)\, ds+\int_{a(t)}^{t}R(s)q(t-s)\, ds$ for all $t\geq 0,$ applying the H\" older inequality
and
\eqref{isti-ee} we may conclude that 
$
\|Q(t)\|_{Y}\leq B_{p}(t)$ for all $t>0.
$
In the case that $p=1,$ set $B_{1}(0):=0$ and 
$$
B_{1}(t):=\bigl(t-a(t)\bigr)^{\beta -1-\gamma}\int^{a(t)}_{0}\|q(s)\|\, ds
+\int_{a(t)}^{t}\|q(s)\|\, ds ,\ t>0. 
$$
Then we can similarly prove that $
\|Q(t)\|_{Y}\leq B_{1}(t)$ for all $t>0.
$
This information can be useful to describe the long time behaviour of function $Q(\cdot).$ 

For example, the function
$$
q(t):=\sum_{n=0}^{\infty}\chi_{[n^{2},n^{2}+1]}(t),\quad t\geq 0
$$ 
is not Stepanov $p$-vanishing but it is equi-Weyl-$p$-vanishing for any finite number $p\geq 1$ (using a mollification, we can simply addapt this example to construct an example of an equi-Weyl-$p$-vanishing function that belongs to the space $C^{\infty}([0,\infty))$ and that is not Stepanov $p$-vanishing). Furthermore, $\int^{t}_{0}\|q(s)\|^{p}\, ds \leq 2+\sqrt{t},$ $t\geq 0$ and we can use this estimate as well as the estimate obtained in the first part of this remark (with $a(t)=t/2,$ $t>0$) to see that $Q\in C_{0}([0,\infty) : Y),$ provided that the inequality $\frac{1}{p}+\beta-\gamma <0$ holds true. 

The interested reader may try to construct some examples in which we have that the function $Q(\cdot)$ belongs to the class $e-W^{p}_{0}([0,\infty):Y)$ or $W^{p}_{0}([0,\infty):Y).$ 
\end{rem}

\section[An application]{An application}\label{prcko-duo}

For the beginning, we need to remind ourselves of the notion of a multivalued linear operator (cf. the monographs \cite{cross} by R. Cross and
\cite{faviniyagi} by A. Favini, A. Yagi for more details on the subject). A multivalued map (multimap) ${\mathcal A} : X \rightarrow P(Y)$ is said to be a multivalued
linear operator (MLO) iff the following holds:
\begin{itemize}
\item[(i)] $D({\mathcal A}) := \{x \in X : {\mathcal A}x \neq \emptyset\}$ is a linear subspace of $X$;
\item[(ii)] ${\mathcal A}x +{\mathcal A}y \subseteq {\mathcal A}(x + y),$ $x,\ y \in D({\mathcal A})$
and $\lambda {\mathcal A}x \subseteq {\mathcal A}(\lambda x),$ $\lambda \in {\mathbb C},$ $x \in D({\mathcal A}).$
\end{itemize}
If $X=Y,$ then we say that ${\mathcal A}$ is an MLO in $X.$

If $x,\ y\in D({\mathcal A})$ and $\lambda,\ \eta \in {\mathbb C}$ with $|\lambda| + |\eta| \neq 0,$ then 
$\lambda {\mathcal A}x + \eta {\mathcal A}y = {\mathcal A}(\lambda x + \eta y).$ Furthermore, if ${\mathcal A}$ is an MLO, then ${\mathcal A}0$ is a linear submanifold of $Y$
and ${\mathcal A}x = f + {\mathcal A}0$ for any $x \in D({\mathcal A})$ and $f \in {\mathcal A}x.$ Put $R({\mathcal A}):=\{{\mathcal A}x :  x\in D({\mathcal A})\}.$
Then the set ${\mathcal A}^{-1}0 = \{x \in D({\mathcal A}) : 0 \in {\mathcal A}x\}$ is called the kernel
of ${\mathcal A}$ and it is denoted by $N({\mathcal A}).$ The inverse ${\mathcal A}^{-1}$ of an MLO is defined by
$D({\mathcal A}^{-1}) := R({\mathcal A})$ and ${\mathcal A}^{-1} y := \{x \in D({\mathcal A}) : y \in {\mathcal A}x\}$.

Let ${\mathcal A}$ be an MLO in $X.$
Then
the resolvent set of ${\mathcal A},$ $\rho({\mathcal A})$ for short, is defined as the union of those complex numbers
$\lambda \in {\mathbb C}$ for which
\begin{itemize}
\item[(i)] $R(\lambda-{\mathcal A})=X$;
\item[(ii)] $(\lambda - {\mathcal A})^{-1}$ is a single-valued linear continuous operator on $X.$
\end{itemize}
The operator $\lambda \mapsto (\lambda -{\mathcal A})^{-1}$ is called the resolvent of ${\mathcal A}$ ($\lambda \in \rho({\mathcal A})$); $R(\lambda : {\mathcal A})\equiv  (\lambda -{\mathcal A})^{-1}$  ($\lambda \in \rho({\mathcal A})$). 
The basic properties of resolvents of single-valued linear operators continue to hold in the multivalued linear setting  (\cite{faviniyagi}).

In the remaining part of paper, it will be supposed that ${\mathcal A}$ is a multivalued linear operator on a Banach space $X$ satisfying the following condition (see e.g. \cite[p. 47]{faviniyagi}):
\begin{itemize} 
\item[(P)]
There exist finite constants $c,\ M>0$ and $\beta \in (0,1]$ such that
$$
\Psi:=\Psi_{c}:=\Bigl\{ \lambda \in {\mathbb C} : \Re \lambda \geq -c\bigl( |\Im \lambda| +1 \bigr) \Bigr\} \subseteq \rho({\mathcal A})
$$
and
$$
\| R(\lambda : {\mathcal A})\| \leq M\bigl( 1+|\lambda|\bigr)^{-\beta},\quad \lambda \in \Psi.
$$
\end{itemize}

Suppose that $\gamma \in (0,1)$ and $\beta>\theta .$
Then the degenerate strongly continuous semigroup $(T(t))_{t> 0}\subseteq L(X)$ generated by ${\mathcal A}$ satisfies the estimate
\begin{align}\label{prva-est}
\|T(t) \| \leq M_{0} e^{-ct}t^{\beta -1},\ t> 0 
\end{align}
for some finite constant $M_{0}>0$ (\cite{EJDE}). Define 
\begin{align*}
T_{\gamma,\nu}(t)x:=t^{\gamma \nu}\int^{\infty}_{0}s^{\nu}\Phi_{\gamma}( s)T\bigl( st^{\gamma}\bigr)x\, ds,\ t>0,\ x\in X,
\end{align*}
\begin{align*}
S_{\gamma}(t):=T_{\gamma,0}(t),\ t>0 \ \ ; \ \ S_{\gamma}(0):=I,
\end{align*}
\begin{align*}
P_{\gamma}(t):=\gamma T_{\gamma,1}(t)/t^{\gamma},\ t>0\ \ ; \ \
R_{\gamma}(t):=t^{\gamma -1}P_{\gamma}(t),\ t>0.
\end{align*}
Then there exist two finite constants $M_{1}>0$ and $M_{2}>0$ such that
\begin{align}\label{druga-est}
\bigl\| R_{\gamma}(t) \bigr\|\leq M_{1}t^{\gamma \beta -1},\ t\in (0,1] \ \ \mbox{ and }\ \ \bigl\| R_{\gamma}(t) \bigr\|\leq M_{2}t^{-1-\gamma},\ t\geq 1.
\end{align}

We will use the following notions:
A continuous function $u: {\mathbb R} \rightarrow X$ is a mild solution of the abstract first order inclusion
\begin{align*}
u(t)\in {\mathcal A}u(t)+f(t),\ t\in {\mathbb R}
\end{align*}
iff 
$$
u(t)=\int^{t}_{-\infty}T(t-s)f(s)\, ds,\ t\in {\mathbb R}.
$$

Let $\gamma \in (0,1).$ A continuous function $u: {\mathbb R} \rightarrow X$ is a mild solution of the abstract fractional relaxation inclusion
\begin{align}\label{inkpow}
D_{t,+}^{\gamma}u(t)\in -{\mathcal A}u(t)+f(t),\ t\in {\mathbb R}
\end{align}
iff 
$$
u(t)=\int^{t}_{-\infty}R_{\gamma}(t-s)f(s)\, ds,\ t\in {\mathbb R}.
$$

Let ${\mathbf D}_{t}^{\gamma}$ denote the Caputo fractional derivative of order $\gamma \in (0,1),$ let $x_{0}\in X$ and let 
$f : [0,\infty) \rightarrow X$ be asymptotically (equi-)Weyl-$p$-almost periodic. Suppose, further, that
$x_{0}$ is a point of continuity of $(S_{\gamma}(t))_{t>0},$ i.e., $\lim_{t\rightarrow 0+}S_{\gamma}(t)x_{0}=x_{0}.$
By a mild solution of the abstract fractional relaxation inclusion
\[
\hbox{(DFP)}_{f,\gamma} : \left\{
\begin{array}{l}
{\mathbf D}_{t}^{\gamma}u(t)\in {\mathcal A}u(t)+f(t),\ t> 0,\\
\quad u(0)=x_{0},
\end{array}
\right.
\]
we mean any function $u\in C([0,\infty) : X)$ satisfying that 
\begin{align*}
u(t)=S_{\gamma}(t)x_{0}+\int^{t}_{0}R_{\gamma}(t-s)f(s)\, ds,\quad t\geq 0.
\end{align*}

Keeping in mind the estimates \eqref{prva-est}-\eqref{druga-est} and the fact that $\lim_{t\rightarrow +\infty}\|S_{\gamma}(t)\|=0,$ it is clear how we can apply the main results of Section 3 in the study of existence and uniqueness of (equi-)Weyl-$p$-almost periodic solutions of the abstract fractional inclusion \eqref{inkpow} and 
asymptotically (equi-)Weyl-$p$-almost periodic solutions of the abstract fractional inclusion (DFP)$_{f,\gamma}$ (solutions of problem (DFP)$_{f,1},$ with ${\mathbf D}_{t}^{1}u(t)\equiv u^{\prime}(t)$ and the meaning clear, can be also examined). Applications can be simply incorporated in the study of qualitative properties of solutions of the following fractional Poisson heat equations with the Dirichlet Laplacian $\Delta$:
\[\left\{
\begin{array}{l}
D_{t,+}^{\gamma}[m(x)v(t,x)]=(\Delta -b )v(t,x) +f(t,x),\ t\in {\mathbb R},\ x\in {\Omega};\\
v(t,x)=0,\quad (t,x)\in [0,\infty) \times \partial \Omega ,\\
\end{array}
\right.
\]
and
\[\left\{
\begin{array}{l}
{\mathbf D}_{t}^{\gamma}[m(x)v(t,x)]=(\Delta -b )v(t,x) +f(t,x),\ t\geq 0,\ x\in {\Omega};\\
v(t,x)=0,\quad (t,x)\in [0,\infty) \times \partial \Omega ,\\
 m(x)v(0,x)=u_{0}(x),\quad x\in {\Omega},
\end{array}
\right.
\]
in the space $X:=L^{p}(\Omega),$ where $\Omega$ is a bounded domain in ${\mathbb R}^{n}$ with smooth boundary, $b>0,$ $m(x)\geq 0$ a.e. $x\in \Omega$, $m\in L^{\infty}(\Omega),$ $\gamma \in (0,1)$ and $1<p<\infty .$ Further on, let
$A(x;D)$ be a second order linear differential operator on $\Omega$ with coefficients continuous on $\overline{\Omega};$
see \cite[Example 6.1]{faviniyagi} for more details. Based on the examination carried out in \cite[Example 4]{element}, we can apply our main results in the study of existence and uniqueness of
asymptotically (equi-)Weyl-$p$-almost periodic solutions of the following fractional damped Poisson-wave type equation\index{equation!damped Poisson-wave} in the space $X:=H^{-1}(\Omega)$ or $X:=L^{p}(\Omega):$
\[\left\{
\begin{array}{l}
{\mathbf D}_{t}^{\gamma}\bigl( m(x){\mathbf D}_{t}^{\gamma}u\bigr)+\bigl(2\omega m(x)-\Delta \bigr){\mathbf D}_{t}^{\gamma}u+\bigl(A(x;D)-\omega \Delta+\omega^{2}m(x)\bigr)u(x,t)=f(x,t),\\ t\geq 0,\ x\in \Omega \ \ ; \ \
u={\mathbf D}_{t}^{\gamma}=0,\quad (x,t)\in \partial \Omega \times [0,\infty),\\
u(0,x)=u_{0}(x),\ m(x)\bigl[ {\mathbf D}_{t}^{\gamma}u(x,0)+\omega u_{0}\bigr]=m(x)u_{1}(x),\quad x\in {\Omega}.
\end{array}
\right.
\]

\bibliographystyle{amsplain}

\end{document}